\documentclass[11pt]{article}
\usepackage{geometry,amsmath,amsthm,amssymb,latexsym}

\usepackage[svgnames]{xcolor}
\usepackage[colorlinks=true,linkcolor=MidnightBlue,citecolor=MidnightBlue,urlcolor=MidnightBlue,pdfpagelabels=false]{hyperref}
\usepackage{indentfirst}
\usepackage{thmtools}
\theoremstyle{plain}
  \declaretheorem[numberwithin=section]{theorem}
  \declaretheorem[numberlike=theorem]{corollary}
  \declaretheorem[numberlike=theorem]{proposition}
  \declaretheorem[numberlike=theorem]{lemma}
  \declaretheorem[numberlike=theorem]{conjecture}
  \declaretheorem[numberlike=theorem]{question}
\theoremstyle{definition}
  
  \declaretheorem[numberlike=theorem]{example}
  \declaretheorem[numberlike=theorem]{remark}
\newenvironment{acknowledgements}{\bigskip\textbf{Acknowledgements.}}{}

\newcommand{\N}{\mathbb{N}}

\newcommand{\multichoose}[2]{\left(\!\middle(\genfrac{}{}{0pt}{}{#1}{#2}\middle)\!\right)}

\begin{document}

\title{The unlabeled list color function of disconnected graphs}

\author{ 
Hemanshu Kaul
\thanks{Department of Applied Mathematics, Illinois Institute of Technology, Chicago, IL, USA (kaul@iit.edu)}
\and
Jeffrey A. Mudrock
\thanks{Department of Mathematics and Statistics, University of South Alabama, Mobile, AL, USA (mudrock@southalabama.edu)}
\and
Armin Straub
\thanks{Department of Mathematics and Statistics, University of South Alabama, Mobile, AL, USA (straub@southalabama.edu)}
}

\date{July 18, 2026}

\maketitle

\begin{abstract}
  Given a graph $G$, its chromatic polynomial $P (G, k)$ counts
  proper $k$-colorings, while the corresponding list color function $P_{\ell}
  (G, k)$ counts the minimum number of proper colorings across all assignments
  of $k$ colors to each vertex. While it is clear that $P_{\ell} (G, k) \leq
  P (G, k)$, Donner showed in 1992 that $P_{\ell} (G, k) = P (G, k)$ whenever $k$ is sufficiently large.  In 1985, Hanlon defined and studied the chromatic polynomial
  for an unlabeled graph. A list version of Hanlon's notion was introduced in
  2024 by Kaul and Mudrock, who further raised the question of whether the analog
  of Donner's result holds in the unlabeled case. While they proved this for
  all connected point-determining graphs, even the case of the edgeless graph on $n$
  vertices remained open and was posed as a conjecture. We prove
  this conjecture and show that it implies that, more generally, a
  disconnected graph satisfies the unlabeled analog of Donner's result if all
  of its connected components do.

\medskip

\noindent \textbf{Keywords.} graph coloring, list coloring, chromatic polynomial, list color function, unlabeled graphs

\noindent \textbf{Mathematics Subject Classification.} 20B25, 05C15, 05C30
\end{abstract}

\section{Introduction} 
All graphs in what follows are nonempty, finite, simple graphs. Generally speaking, we follow West~\cite{W01} for terminology and notation. The set of natural numbers is $\N = \{1, 2, 3, \ldots\}$. For $m \in \N$, we write $[m]$ for the set $\{1, \ldots, m\}$.

In the classical vertex coloring problem, we wish to color the vertices of a graph $G$ with up to $k$ colors from $[k]$ so that adjacent vertices
receive different colors, a so-called \emph{proper $k$-coloring}. The
\emph{chromatic number} of a graph, denoted $\chi (G)$, is the smallest $k$
such that $G$ has a proper $k$-coloring. In 1912, Birkhoff~\cite{B12}
introduced the notion of the chromatic polynomial with the hope of using it to
make progress on the Four Color Problem. For $k \in \N$, the \emph{chromatic
polynomial} of a graph $G$, $P (G, k)$, is the number of proper $k$-colorings
of $G$. It is well-known that $P (G, k)$ is a monic polynomial in $k$ of
degree $|V (G) |$ (e.g., see~\cite{B94}).

We say that two proper $k$-colorings $f, g$ of $G$ are \emph{equivalent up to
labeling} if there is an automorphism $\pi$ of $G$ such that $f (v) = g (\pi
(v))$ for all vertices $v \in V (G)$. Let $P^u (G, k)$ denote the number of
resulting equivalence classes of proper $k$-colorings of $G$. The $u$ in the
superscript indicates that $P^u (G, k)$ is the unlabeled analog of the
chromatic polynomial $P (G, k)$. Indeed, if $\mathcal{G}$ is the unlabeled
graph corresponding to $G$, that is $\mathcal{G}$ is the isomorphism class of $G$, then $P^u (G, k)$ equals the chromatic polynomial
$P (\mathcal{G}, k)$ as introduced and studied by Hanlon~\cite{H85} in 1985.
To avoid possible confusion, we will operate on labeled graphs $G$ throughout
what follows.

List coloring is a well-known variation on classical vertex coloring which was
introduced independently by Vizing~\cite{V76} and Erd{\H o}s, Rubin, and
Taylor~\cite{ET79} in the 1970s. For list coloring, we associate a
\emph{list assignment} $L$ with a graph $G$ such that each vertex $v \in V
(G)$ is assigned a set of available colors $L (v)$. We say $G$ is
\emph{$L$-colorable} if there is a proper coloring $f$ of $G$ such that $f
(v) \in L (v)$ for each $v \in V (G)$. We refer to $f$ as a \emph{proper
$L$-coloring} of $G$. A list assignment $L$ for $G$ is called a
\emph{$k$-assignment} if $|L (v) | = k$ for each $v \in V (G)$. The
\emph{list chromatic number} of a graph $G$, denoted $\chi_{\ell} (G)$, is
the smallest $k$ such that $G$ is $L$-colorable whenever $L$ is a
$k$-assignment for $G$. It is immediately obvious that $\chi (G) \leq
\chi_{\ell} (G)$ for any graph $G$.

The notion of chromatic polynomial was extended to list coloring in the early
1990s by Kostochka and Sidorenko~\cite{KS90}. If $L$ is a list assignment
for $G$, $P (G, L)$ denotes the number of proper $L$-colorings of
$G$. The \emph{list color function} $P_{\ell} (G, k)$ is the minimum value
of $P (G, L)$ where the minimum is taken over all possible $k$-assignments $L$
for $G$. Since a $k$-assignment could assign the same $k$ colors to every
vertex of $G$, $P_{\ell} (G, k) \leq P (G, k)$ for each $k \in \N$. In
general, the list color function can differ significantly from the chromatic
polynomial for small values of $k$. However, in 1992, answering a question of
Kostochka and Sidorenko~\cite{KS90}, Donner~\cite{D92} showed the
following.

\begin{theorem}[Donner~\cite{D92}]
  \label{thm:donner}For any graph $G$ there is an $N \in \N$ such that
  $P_{\ell} (G, k) = P (G, k)$ whenever $k \geq N$.
\end{theorem}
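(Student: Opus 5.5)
We follow the approach usually attributed to Donner and later sharpened by Thomassen and by Wang, Qian and Yan. The idea is to compare $P(G,L)$ with $P(G,k)$ through an inclusion--exclusion expansion over edge subsets, and to show that any deviation of $L$ from a constant assignment produces a strictly positive surplus that dominates the error terms once $k$ is large.

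\textbf{Step 1: expansion in terms of colour coincidences.} Fix a $k$-assignment $L$ and let $n=|V(G)|$. For $S \subseteq E(G)$, let $c_1,\dots,c_r$ be the vertex sets of the components of the spanning subgraph $(V(G),S)$. Whitney's inclusion--exclusion gives
\[
P(G,L)=\sum_{S\subseteq E(G)} (-1)^{|S|} \prod_{i=1}^{r} \Bigl|\bigcap_{v\in c_i} L(v)\Bigr| ,
\]
and for the constant assignment the same formula yields $P(G,k)=\sum_S (-1)^{|S|}k^{r(S)}$. Each factor $|\bigcap_{v\in c_i}L(v)|$ is at most $k$. We therefore write it as $k$ minus a deficiency, and group the terms according to the number of components.

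\textbf{Step 2: the dominant correction term.} The terms with $S=\emptyset$ and $|S|=1$ contribute
\[
k^n-\sum_{uv\in E}|L(u)\cap L(v)|\,k^{n-2},
\]
compared with $k^n-|E|k^{n-1}$ in the constant case. We set $d(u,v)=k-|L(u)\cap L(v)|$ and $D=\sum_{uv\in E} d(u,v)$. The difference $P(G,L)-P(G,k)$ then has leading part $D\,k^{n-2}$.

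\textbf{Step 3: bounding the remaining terms (the main obstacle).} For $|S|\ge 2$ we must show that $\bigl|\prod_i|\cap L| - k^{r}\bigr| \le C\,D\,k^{r-1}$, where $C$ depends only on $G$. This works because each deficiency of an intersection over a connected set $c_i$ is at most the sum of the $d(u,v)$ along a spanning tree of $c_i$ inside $S$. Expanding the product, every correction term is bounded by $D$ times products of deficiencies and powers of $k$. The delicate point is that deficiencies are not small: they can be as large as $k$. The bound must therefore be uniform in this sense, and we use $\prod (k-a_i)\ge k^r-\sum a_i k^{r-1}$ together with the crude upper bound $k^r$. Since $r\le n-2$ whenever $|S|\ge 2$, the total error from all $S$ with $|S|\ge2$ is at most $2^{|E|}\,C\,D\,k^{n-3}$. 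Combining this with Step 2 gives
\[
P(G,L)-P(G,k)\ge D\,k^{n-2}-C' D\,k^{n-3}.
\]

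\textbf{Step 4: conclusion.} If $D>0$, then for $k> C'$ we get $P(G,L)>P(G,k)$. If $D=0$, then every pair of adjacent vertices has identical lists. Since a colouring only constrains adjacent vertices, each component can then be relabelled to use the lists $[k]$, which gives $P(G,L)=P(G,k)$. Taking $N=C'+1$, which is independent of $L$, we obtain $P_\ell(G,k)\ge P(G,k)$ for $k\ge N$. The reverse inequality $P_{\ell}(G,k) \leq P(G,k)$ was noted earlier, so equality holds.
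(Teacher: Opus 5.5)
The paper gives no proof of this theorem. It quotes it from Donner~\cite{D92} and only mentions the explicit thresholds of~\cite{T09,WQ17,DZ22}, so there is nothing in the paper to compare your argument with. Judged on its own, your proof is correct.

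Your inclusion--exclusion formula for $P(G,L)$ is right. The terms with $|S|\le 1$ contribute exactly $D\,k^{n-2}$ to $P(G,L)-P(G,k)$, and the estimate for $|S|\ge 2$ holds, in fact with $C=1$.

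Two steps you only assert should be written out:
\begin{enumerate}
\item \emph{The spanning-tree bound.} If $x\in L(v_0)\setminus\bigcap_{v\in c_i}L(v)$, follow the tree path from $v_0$ to a vertex whose list misses $x$. There is a first edge $pq$ on it with $x\in L(p)\setminus L(q)$, and $|L(p)\setminus L(q)|=d(p,q)$ because every list has exactly $k$ elements. The spanning trees of distinct components of $(V(G),S)$ are edge-disjoint subsets of $S$, so this gives $\sum_i a_i\le\sum_{e\in S}d(e)\le D$.
\item \emph{No expansion of the product is needed.} For $0\le a_i\le k$, the inequality you quote together with $\prod_i(k-a_i)\le k^r$ gives $0\le k^r-\prod_i(k-a_i)\le k^{r-1}\sum_i a_i$ directly.
\end{enumerate}
Also, $r(S)\le n-2$ for $|S|\ge 2$ uses that $G$ is simple, which is the paper's standing assumption. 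Altogether
\[
P(G,L)-P(G,k)\;\ge\;D\,k^{n-3}\bigl(k-2^{|E(G)|}+|E(G)|+1\bigr),
\]
so $N=2^{|E(G)|}$ works. The case $D=0$ needs no separate treatment, since every error term then vanishes; your relabelling argument is also fine.

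What your route gives is a short, self-contained existence proof with an explicit but exponential threshold. One extra step makes it linear in $|E(G)|$:
\begin{itemize}
\item Each term depends on $S$ only through the partition of $V(G)$ into components of $(V(G),S)$. The standard sign-reversing involution behind Whitney's broken-circuit theorem toggles an edge lying in the closure of the others, so it preserves this partition. Hence the cancellation applies verbatim to list assignments.
\item You may therefore sum only over sets containing no broken circuit. These are forests, and at most $\binom{|E(G)|-1}{j-1}$ of size $j$ contain a given edge.
\item Bounding each term by $k^{n-|S|-1}\sum_{e\in S}d(e)$ then gives
\[
P(G,L)-P(G,k)\;\ge\;D\,k^{n-2}\bigl(2-(1+1/k)^{|E(G)|-1}\bigr).
\]
\end{itemize}
This yields a threshold of about $(|E(G)|-1)/\ln 2$, the same order as the bounds cited after the theorem.
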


The threshold $N$ was improved to $N \le |E(G)|-1$ in~\cite{DZ22} (improving upon bounds in~\cite{T09} and~\cite{WQ17}).

Following~\cite{KM25}, we now introduce an unlabeled version of list
coloring. For a list assignment $L$ for $G$, let $P^u (G, L)$ denote the
number of equivalence classes of proper $L$-colorings of $G$ where we again
consider two proper $L$-colorings $f, g$ equivalent up to labeling if $f = g \circ
\pi$ for some automorphism $\pi$ of $G$. Likewise, we define the \emph{unlabeled list color function}
$P^u_{\ell} (G, k)$ as the minimum value of $P^u (G, L)$ where the minimum is
taken over all possible $k$-assignments $L$ for $G$.

\begin{remark}
  If $\mathcal{G}$ is the unlabeled graph corresponding to $G$, then
  $P^u_{\ell} (G, k)$ equals the list color function $P_{\ell} (\mathcal{G},
  k)$ introduced by the first two authors in \cite{KM25}.
\end{remark}

Note that, if $L_k$ is the $k$-assignment for $G$ that assigns $[k]$ to each
vertex of $G$, then $P^u (G, L_k) = P^u (G, k)$. We thus clearly have
$P^u_{\ell} (G, k) \leq P^u (G, k)$ for each $k \in \N$. In light of
Theorem~\ref{thm:donner}, the following question, raised by the first two
authors in \cite{KM25}, is natural.

\begin{question}[Kaul, Mudrock \cite{KM25}]
  \label{ques: fundamental}For which graphs $G$ does there exist an $N \in \N$
  such that $P_{\ell}^u (G, k) = P^u (G, k)$ whenever $k \geq N$?
\end{question}

We suspect that the answer to Question~\ref{ques: fundamental} is all graphs.
It was verified in~\cite{KM25} that all point-determining graphs satisfy the statement of Question~\ref{ques: fundamental}.  It is known that almost all graphs are point-determining (see~\cite{ER63} and note that asymmetric graphs are point-determining).

It was further observed in~\cite{KM25} that, if $G_1$ and $G_2$ are
connected and non-isomorphic graphs, then
\begin{equation}
  P_{\ell}^u (G_1 + G_2, k) = P_{\ell}^u (G_1, k) P_{\ell}^u (G_2, k) .
  \label{eq:disjoint}
\end{equation}
Here, $G_1 + G_2$ denotes the disjoint union of $G_1$ and $G_2$. We note that
the relation~\eqref{eq:disjoint} continues to hold when $G_1$ and $G_2$ are
disconnected graphs such that no connected component of $G_1$ is isomorphic to
a connected component of $G_2$. On the other hand, disconnected graphs that
have two distinct, isomorphic, connected components (in which case
\eqref{eq:disjoint} does not hold in general) posed an interesting challenge
in~\cite{KM25}. In particular, the following provocatively simple case of
Question~\ref{ques: fundamental} remained open.

\begin{conjecture}[Kaul, Mudrock \cite{KM25}]
  \label{conj: provoke}Let $\bar{K}_n$ be an edgeless graph on $n$ vertices.
  For each $n \in \N$ there is an $N \in \N$ such that $P_{\ell}^u (\bar{K}_n,
  k) = P^u (\bar{K}_n, k)$ whenever $k \geq N$.
\end{conjecture}

The main result of this paper is to prove this conjecture by showing the
slightly stronger conclusion that
\begin{equation}
  P_{\ell}^u (\bar{K}_n, k) = P^u (\bar{K}_n, k) = \binom{k + n - 1}{n} \quad
  \text{for all $k, n \in \N$} . \label{eq:provoke}
\end{equation}
Let us fix $\bar{K}_n$ to be the edgeless graph with vertex set $[n]$. Note
that a $k$-assignment $L$ for $\bar{K}_n$ assigns to each $j \in [n]$ a
$k$-element set of colors $L (j)$. Since there are no edges, every tuple
$(c_1, c_2, \ldots, c_n)$ with $c_j \in L (j)$ represents a proper
$L$-coloring of $\bar{K}_n$. Such colorings are equivalent up to labeling if
and only if the corresponding tuples are a permutation of each other. The
equivalence class of the coloring represented by the tuple $(c_1, c_2, \ldots,
c_n)$ is therefore uniquely represented by the multiset $[c_1, c_2, \ldots,
c_n]$. Here, and below, we use square brackets $[\ldots]$ in place of $\{
\ldots \}$ to denote multisets and to avoid confusion with regular set
notation. Note that in the case where the sets $L (j)$ are all equal to
$[k]$, we obtain multisets $[c_1, c_2, \ldots, c_n]$ of $n$ elements drawn
from $k$ choices, the number of such multisets is well~known to be the binomial coefficient
$\binom{k + n - 1}{n}$. The claim~\eqref{eq:provoke} is therefore equivalent
to the following result (where we write $C_j$ in place of $L (j)$).

\begin{theorem}
  \label{thm:edgeless}Suppose that $C_1, \ldots, C_n$ are sets of size $k$.
  Then, the set of multisets
  \begin{equation*}
    \left\{ [c_1, \ldots, c_n] \; : \; c_j \in C_j \right\}
  \end{equation*}
  has size at least $\binom{k + n - 1}{n}$.
\end{theorem}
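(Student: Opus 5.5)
Throughout, let $M(C_1,\ldots,C_n)$ denote the set of multisets in Theorem~\ref{thm:edgeless}. The plan is to argue by induction on $n$ via a \emph{compression} (shifting) argument that pushes the sets $C_j$ toward all being equal, without ever increasing $|M(C_1,\ldots,C_n)|$. Since $|M([k],\ldots,[k])| = \binom{k+n-1}{n}$, this gives the bound. We may assume all colors are integers, so each $C_j$ is a finite subset of $\mathbb{Z}$ of size $k$ and there is a natural order to compress along.

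\textbf{Step 1 (compression).} For two colors $a<b$, define the usual down-shift on a single set: $S_{ab}(C)=(C\setminus\{b\})\cup\{a\}$ if $b\in C$ and $a\notin C$, and $S_{ab}(C)=C$ otherwise. Apply $S_{ab}$ to every $C_j$ at once. On multisets, define a shift on $M$ that replaces one occurrence of $b$ by $a$, applied only when the result is not already in the family. The claim to prove is
\[
|M(S_{ab}C_1,\ldots,S_{ab}C_n)|\le |M(C_1,\ldots,C_n)|.
\]
To prove it, group multisets by their restriction to colors other than $a,b$. Fix such a restriction $R$, with $n-m$ elements, and let $J$ range over the ways of choosing which indices realize $R$. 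For each choice, the remaining $m$ indices split into four types according to whether $C_j$ contains $\{a,b\}$, only $a$, only $b$, or neither. For that choice, the multisets with restriction $R$ are then determined by the number $t$ of $b$'s, and the achievable values of $t$ form an interval $[\beta,\beta+\gamma]$, where $\beta$ counts the only-$b$ indices and $\gamma$ the both indices. After shifting, every only-$b$ index becomes an only-$a$ index, so the interval becomes $[0,\gamma]$, which has the same length. The fiber over $R$ is a union of such intervals over the choices $J$. I would need to show that replacing each interval by the same-length interval starting at $0$ does not increase the size of the union. That is clear, because the shifted union is just the longest interval $[0,\max\gamma]$. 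The one thing to check carefully is that the set of admissible $J$ is unchanged by the shift. It is, since the restriction $R$ only involves other colors and $S_{ab}$ leaves other colors fixed.

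\textbf{Step 2 (termination).} Each nontrivial shift strictly decreases $\sum_j\sum_{c\in C_j}c$, so iterating over all pairs $a<b$ terminates. At that point the family is stable under every $S_{ab}$, which means each $C_j$ is a down-set among the colors present. Colors below the global minimum can also be shifted in, so after a translation every $C_j$ is a down-closed $k$-subset of $\N$, that is, $C_j=[k]$. The count is then exactly $\binom{k+n-1}{n}$, finishing the proof without needing an induction on $n$.

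\textbf{Main obstacle.} The crux is the fiber argument in Step 1, in particular the bookkeeping of which index assignments realize $R$. A single multiset can arise from several index assignments, so the fiber over $R$ is a union of intervals and not a single interval. I expect the argument that the shifted union is the single interval $[0,\max\gamma]$, and hence no larger than the original union, to need the most care. If that bookkeeping breaks, the fallback is an induction on $n$ that conditions on the multiplicity of the smallest color and uses Pascal's rule $\binom{k+n-1}{n}=\binom{k+n-2}{n-1}+\binom{k+n-2}{n}$.
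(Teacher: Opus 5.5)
Your proposal is correct and is essentially the paper's proof. The paper uses the same shifting operator $\mathcal{R}_{c,d}$ and proves it cannot increase the count by exactly your fiber argument, phrased via the monomials $M(\boldsymbol{x})$ free of $x_c,x_d$ (the exponent sets $\{r,\ldots,r+t\}$ become $\{0,\ldots,t\}$, whose union has $t_{\max}+1$ elements); it then terminates via the same weight $\sum_j\sum_{c\in C_j}c$. The one thing to tidy is Step 2: keep the shifts inside a fixed finite universe (the paper uses $[kn]$; the colors currently present also work), because shifting in colors below the minimum breaks the weight-decrease termination. With that, stability already forces every $C_j$ to be the same set of $k$ smallest colors, so no translation is needed.
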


We prove Theorem~\ref{thm:edgeless} in Section~\ref{sec:edgeless}.  While working on extending this result, the first two authors were contacted by W.~T.~Gowers~\cite{G26} in May 2026. As part of an experiment, Gowers had asked OpenAI's ChatGPT 5.5 Pro to look for an open problem in combinatorics that it considered approachable and attempt to solve it. The model selected Conjecture~\ref{conj: provoke} and produced a possible affirmative proof which Gowers shared with the first two authors. We have confirmed the correctness of the AI-produced proof and, since it differs from our proof in Section~\ref{sec:edgeless}, we have included a paraphrased version of the proof in Appendix~\ref{sec:edgeless:ai}.

In addition to an alternative proof of Theorem~\ref{thm:edgeless}, the AI-produced output revealed the following applications of
Theorem~\ref{thm:edgeless} to Question~\ref{ques: fundamental}. At the time of Gowers' communication, the authors had only realized for themselves the special case of Corollary~\ref{cor:mono} when $G$ is a complete graph. In the following, given a graph $G$ and $m \in \N$, we write $m G$ for the disjoint union of $m$ copies of $G$. Similarly, given vertex disjoint graphs $G_1, \ldots, G_r$, we write $\sum_{j = 1}^r G_j$ for their disjoint union.

\begin{corollary}
  \label{cor:mono}Let $G$ be a connected graph such that there exists $N \in
  \N$ so that $P_{\ell}^u (G, k) = P^u (G, k)$ whenever $k \geq N$. Then,
  for any $m \in \N$, $P_{\ell}^u (m G, k) = P^u (m G, k)$ whenever $k
  \geq N$.
\end{corollary}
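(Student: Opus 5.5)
The plan is to reduce the count of equivalence classes of $L$-colorings of $mG$ to the multiset count in Theorem~\ref{thm:edgeless}. The "elements" being chosen will be equivalence classes of colorings of a single copy of $G$.

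\textbf{Setting up the reduction.} Write $mG = G_1 + \cdots + G_m$ and fix isomorphisms $\varphi_j : G \to G_j$. Since $G$ is connected, the components of $mG$ are exactly the $G_j$. Every automorphism of $mG$ therefore permutes the copies, so $\mathrm{Aut}(mG)$ is the wreath product $\mathrm{Aut}(G) \wr S_m$: it permutes the copies and applies an automorphism of $G$ inside each one.

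Let $\Omega$ be the set of all maps from $V(G)$ to the (countable) universe of colors, taken modulo precomposition by $\mathrm{Aut}(G)$. Given a $k$-assignment $L$ for $mG$, set $L_j = L \circ \varphi_j$, which is a $k$-assignment for $G$. Let $C_j \subseteq \Omega$ be the set of $\mathrm{Aut}(G)$-classes of proper $L_j$-colorings of $G$. Then $|C_j| = P^u(G, L_j)$.

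\textbf{The multiset bijection.} To a proper $L$-coloring $f$ of $mG$ I will associate the multiset $[\,[f\circ\varphi_1], \ldots, [f\circ\varphi_m]\,]$ of elements of $\Omega$, whose $j$th entry lies in $C_j$. I then check two things.
\begin{itemize}
\item Two colorings $f, g$ are equivalent under $\mathrm{Aut}(mG)$ if and only if their multisets coincide. This follows from the wreath product description: a permutation of the copies reorders the entries, and automorphisms within copies do not change the classes.
\item Every multiset $[\omega_1, \ldots, \omega_m]$ with $\omega_j \in C_j$ arises. To see this, pick representatives and color copy $j$ accordingly.
\end{itemize}
Hence $P^u(mG, L)$ equals the number of multisets $\{[\omega_1, \ldots, \omega_m] : \omega_j \in C_j\}$.

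Using the common universe $\Omega$ for all copies is the point that needs care. It guarantees that two copies colored "the same up to automorphism" give equal elements of $\Omega$, even though their lists differ.

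\textbf{Applying the theorem.} Now let $k \ge N$ and put $K = P^u(G,k) = P^u_\ell(G,k)$. Each $|C_j| = P^u(G,L_j) \ge P^u_\ell(G,k) = K$, so we may choose subsets $C_j' \subseteq C_j$ of size exactly $K$. Shrinking the sets can only decrease the number of multisets. Theorem~\ref{thm:edgeless}, applied with $n = m$ and set size $K$, therefore gives
\[
P^u(mG, L) \ge \binom{K + m - 1}{m}.
\]

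For the constant assignment $L_k$ (every vertex gets $[k]$), all the $C_j$ equal the same set of size $K$. By the bijection, $P^u(mG,k)$ is then the number of $m$-multisets from a $K$-set, which is exactly $\binom{K+m-1}{m}$.

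Combining the two bounds gives $P^u_\ell(mG,k) \ge P^u(mG,k)$. The reverse inequality always holds, so $P^u_\ell(mG,k) = P^u(mG,k)$ for all $k \ge N$.
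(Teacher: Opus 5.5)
Your proof is correct and follows the same route as the paper. You identify $P^u(mG,L)$ with the number of multisets $[\omega_1,\ldots,\omega_m]$, where $\omega_j$ ranges over unlabeled proper $L_j$-colorings of the $j$th copy, then apply Theorem~\ref{thm:edgeless} using $|C_j|\ge P^u(G,k)$ and compare with the constant assignment; your explicit handling of the common universe $\Omega$, the wreath-product structure of $\mathrm{Aut}(mG)$, and the shrinking to subsets of size exactly $K$ just spells out what the paper compresses into ``unraveling the definitions'' and its remark that the theorem still holds for sets of size at least $k$.
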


\begin{proof}
  Let $k \geq N$. Let $\mathcal{C}$ be the set of equivalence classes of
  proper $k$-colorings of $G$. It readily follows from unraveling the
  definitions that
  \begin{equation}
    P^u (m G, k) = \left| \left\{ [c_1, \ldots, c_m] \; : \; c_j \in
    \mathcal{C} \right\} \right| = \binom{| \mathcal{C} | + m - 1}{m} .
    \label{eq:PumGk}
  \end{equation}
  On the other hand, let $L^{(j)}$, for $j \in [m]$, be arbitrary
  $k$-assignments for $G$. Let $L$ be the corresponding $k$-assignment for $m
  G$. Further, let $\mathcal{C}_j$ be the set of equivalence classes of proper
  $L^{(j)}$-colorings of $G$. Since $k \geq N$, we have $P_{\ell}^u (G,
  k) = P^u (G, k)$ by assumption. Accordingly, $| \mathcal{C}_j | \geq |
  \mathcal{C} |$. Therefore, by Theorem~\ref{thm:edgeless} (whose statement
  obviously remains true if the sets $C_1, \ldots, C_n$ are allowed to have
  size $k$ or bigger),
  \begin{equation*}
    P^u (m G, L) = \left| \left\{ [c_1, \ldots, c_m] \; : \; c_j \in
     \mathcal{C}_j \right\} \right| \geq \binom{| \mathcal{C} | + m -
     1}{m} .
  \end{equation*}
  In combination with \eqref{eq:PumGk} this proves that $P_{\ell}^u (m G, k) =
  P^u (m G, k)$.
\end{proof}

\begin{corollary}
  \label{cor:multi}Let $G_1, \ldots, G_r$ be pairwise non-isomorphic connected
  graphs such that there exists $N \in \N$ so that $P_{\ell}^u (G_j, k) = P^u
  (G_j, k)$ for each $j \in [r]$ whenever $k \geq N$. Then, for any $m_1,
  \ldots, m_r \in \N$, the graph $G = \sum_{j = 1}^r m_j G_j$ satisfies
  $P_{\ell}^u (G, k) = P^u (G, k)$ whenever $k \geq N$.
\end{corollary}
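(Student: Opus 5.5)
The plan is to reduce to Corollary~\ref{cor:mono} and then multiply over the isomorphism classes of components, as in \eqref{eq:disjoint}. Fix $k \geq N$ and set $H_j = m_j G_j$. The key structural fact is this: since the $G_j$ are pairwise non-isomorphic and connected, every automorphism of $G$ maps each component to an isomorphic component. Hence $\mathrm{Aut}(G)$ preserves each $H_j$ setwise and factors as the direct product $\prod_{j} \mathrm{Aut}(H_j)$. I would state this carefully, since it is the only place the non-isomorphism hypothesis enters.

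From the factorization I would deduce that, for any $k$-assignment $L$ of $G$ with restrictions $L_j$ to $H_j$,
\begin{equation*}
  P^u(G, L) = \prod_{j=1}^r P^u(H_j, L_j).
\end{equation*}
The justification is as follows. A proper $L$-coloring of $G$ is exactly a tuple $(f_1,\dots,f_r)$ of proper $L_j$-colorings of the $H_j$. Two such tuples are equivalent under $\prod_j \mathrm{Aut}(H_j)$ if and only if they are equivalent coordinatewise. So the orbits of the product group are the products of the orbits, and counting orbits gives the product formula. Applying the same formula to the assignment giving $[k]$ to every vertex yields $P^u(G,k) = \prod_j P^u(H_j,k)$.

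Next comes the lower bound. Corollary~\ref{cor:mono}, applied to each connected $G_j$ with threshold $N$, gives $P^u(H_j, L_j) \geq P^u_\ell(H_j,k) = P^u(H_j,k)$ for $k \geq N$. Multiplying these inequalities gives $P^u(G,L) \geq P^u(G,k)$ for every $k$-assignment $L$. Together with the trivial inequality $P^u_\ell(G,k) \leq P^u(G,k)$, this yields equality.

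The main obstacle is the automorphism-group factorization. It requires checking that an automorphism sends components to components (it is a graph isomorphism, so it preserves connectivity) and that isomorphic images of a component of type $G_j$ must be of type $G_j$. This is routine but is the crux. Everything else is bookkeeping with orbits of a direct product acting on a product set.
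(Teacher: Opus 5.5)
Your proposal is correct and follows the paper's proof: you apply Corollary~\ref{cor:mono} to each $m_j G_j$ and combine the results using multiplicativity over the non-isomorphic component classes, which is relation~\eqref{eq:disjoint} in its extended form. The only difference is that you prove this multiplicativity yourself, for each $k$-assignment $L$, via the factorization $\mathrm{Aut}(G) \cong \prod_j \mathrm{Aut}(m_j G_j)$, whereas the paper simply invokes the relation as a known fact.
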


\begin{proof}
  This follows from Corollary~\ref{cor:mono} combined with the
  relation~\eqref{eq:disjoint} which, suitably iterated, in the present
  situation implies that
  \begin{equation*}
    P_{\ell}^u (G, k) = \prod_{j = 1}^r P_{\ell}^u (m_j G_j, k) .
  \end{equation*}
\end{proof}

Notice that Corollary~\ref{cor:multi} implies that the answer to Question~\ref{ques: fundamental} is yes if for every connected graph $G$ there exists an $N \in \N$ such that $P_{\ell}^u(G,k) = P^u(G,k)$ whenever $k \geq N$.

\section{Proof of Conjecture~\ref{conj: provoke}}\label{sec:edgeless}

We prove Theorem~\ref{thm:edgeless} which implies
Conjecture~\ref{conj: provoke} raised in~\cite{KM25}. Without loss of
generality, we assume that $C_1, \ldots, C_n$ are $k$-element subsets of $C =
[k n]$. As a first step, we express the problem in terms of a suitable
generating polynomial. Given $\boldsymbol{C}= (C_1, \ldots, C_n)$ as in
Theorem~\ref{thm:edgeless}, we define
\begin{equation*}
  N_{\boldsymbol{C}} = \left| \left\{ [c_1, \ldots, c_n] \; : \; c_j \in C_j
   \right\} \right| .
\end{equation*}
We further introduce
\begin{equation*}
  P_{\boldsymbol{C}} (\boldsymbol{x}) = \prod_{i = 1}^n \sum_{c \in C_i} x_c
\end{equation*}
where $\boldsymbol{x}$ is the collection of variables $(x_c)_{c \in C}$. We
assume that our variables commute so that the following useful observation
follows by construction.

\begin{proposition}
  $N_{\boldsymbol{C}}$ equals the number of monomials in $P_{\boldsymbol{C}}
  (\boldsymbol{x})$.
\end{proposition}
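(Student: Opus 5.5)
The plan is to expand the product defining $P_{\boldsymbol{C}}(\boldsymbol{x})$ and match its monomials with the multisets counted by $N_{\boldsymbol{C}}$. Distributing the product over the sums gives
\begin{equation*}
  P_{\boldsymbol{C}} (\boldsymbol{x}) = \sum_{(c_1, \ldots, c_n) \in C_1 \times \cdots \times C_n} x_{c_1} x_{c_2} \cdots x_{c_n} ,
\end{equation*}
so $P_{\boldsymbol{C}}$ is a sum of terms indexed by the tuples $(c_1, \ldots, c_n)$ with $c_j \in C_j$. Each such term appears with coefficient $+1$. The number of distinct monomials in $P_{\boldsymbol{C}}$ is therefore the number of distinct values taken by the map $\phi \colon (c_1, \ldots, c_n) \mapsto x_{c_1} \cdots x_{c_n}$ on $C_1 \times \cdots \times C_n$.

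The second step is to show that $\phi$ factors through the multiset map $(c_1, \ldots, c_n) \mapsto [c_1, \ldots, c_n]$, and that the induced map from multisets to monomials is injective. Because the variables commute, $x_{c_1} \cdots x_{c_n} = \prod_{c \in C} x_c^{m(c)}$, where $m(c) = |\{ j : c_j = c \}|$ is the multiplicity of $c$ in $[c_1, \ldots, c_n]$. Thus the monomial depends only on the multiset. Conversely, distinct monomials in the variables $(x_c)_{c \in C}$ have distinct exponent vectors, so the monomial determines the multiplicity function $m$ and hence the multiset. This gives a bijection between $\{ [c_1, \ldots, c_n] : c_j \in C_j \}$ and the set of monomials occurring in $P_{\boldsymbol{C}}(\boldsymbol{x})$.

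The only point to check is that no monomial cancels when like terms are collected. That holds because every term enters with coefficient $+1$, so each collected coefficient is a positive integer: the number of tuples giving that multiset. Hence the monomials present are exactly the images of $\phi$, and their number is $N_{\boldsymbol{C}}$. There is no real obstacle here; the proposition is a bookkeeping step setting up the algebraic approach to Theorem~\ref{thm:edgeless}.
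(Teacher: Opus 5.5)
Your proof is correct and follows the paper's own reasoning: the paper gives no separate argument and only remarks that the proposition holds ``by construction'' once the variables are taken to commute. You have written out what that remark abbreviates, namely the bijection between multisets and exponent vectors and the fact that no cancellation can occur because every term in the expansion has coefficient $+1$.
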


\begin{example}
  Let $n = k = 3$. In the case $C_1 = C_2 = C_3 = \{ 1, 2, 3 \}$, we have
  \begin{eqnarray*}
    P_{\boldsymbol{C}} (\boldsymbol{x}) & = & (x_1 + x_2 + x_3)^3\\
    & = & x_1^3 + x_2^3 + x_3^3 + 6 x_1 x_2 x_3\\
    &  & + 3 x_1^2 x_2 + 3 x_1 x_2^2 + 3 x_1^2 x_3 + 3 x_1 x_3^2 + 3 x_2^2
    x_3 + 3 x_2 x_3^2 .
  \end{eqnarray*}
  The number of monomials is $N_{\boldsymbol{C}} = 10 = \binom{3 + 3 - 1}{3}$.  Observe that each monomial represents a multiset.
\end{example}
Given $c, d \in C$, we now introduce an operator $\mathcal{R}_{c, d}$ on
tuples $\boldsymbol{C}= (C_1, \ldots, C_n)$ that replaces the color $c$ by $d$
``where possible''. More precisely, $\mathcal{R}_{c, d} (\boldsymbol{C})
=\boldsymbol{D}$ with $\boldsymbol{D}= (D_1, \ldots, D_n)$ where
\begin{equation*}
  D_i = \left\{\begin{array}{ll}
     \{ d \} \cup C_i \backslash \{ c \}, & \text{if $c \in C_i$ and $d \not\in
     C_i$},\\
     C_i, & \text{otherwise} .
   \end{array}\right.
\end{equation*}
We note that this kind of operation, often referred to as shifting, is a
common ingredient in proofs in combinatorics and, in particular, extremal set
theory \cite{F87}.

\begin{lemma}
  \label{lem:NCD}If $\mathcal{R}_{c, d} (\boldsymbol{C}) =\boldsymbol{D}$ then
  $N_{\boldsymbol{C}} \geq N_{\boldsymbol{D}}$.
\end{lemma}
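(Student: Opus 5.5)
The plan is to decompose both $N_{\boldsymbol{C}}$ and $N_{\boldsymbol{D}}$ according to the part of a multiset that avoids the two colors $c,d$, and to compare the two counts fiber by fiber. Every multiset $M=[c_1,\ldots,c_n]$ splits uniquely into a multiset $R$ of colors from $C\setminus\{c,d\}$ together with the multiplicity $a$ of $c$ and the multiplicity $b$ of $d$, where $a+b=n-|R|$. Hence
\begin{equation*}
  N_{\boldsymbol{C}}=\sum_R |A_{\boldsymbol{C}}(R)| ,
\end{equation*}
where $A_{\boldsymbol{C}}(R)$ is the set of values $a$ for which the multiset $R+a[c]+(n-|R|-a)[d]$ is realizable from $\boldsymbol{C}$. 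The analogous formula holds for $\boldsymbol{D}$. It therefore suffices to prove $|A_{\boldsymbol{C}}(R)|\ge|A_{\boldsymbol{D}}(R)|$ for each fixed $R$.

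Fix $R$ with $|R|=n-s$. A realization of such a multiset amounts to choosing an index set $I\subseteq[n]$ with $|I|=s$, assigning $c$ or $d$ to the indices in $I$, and assigning colors from $C_i\setminus\{c,d\}$ to the indices $i\notin I$ so that together they form $R$. The key observation is that $C_i\setminus\{c,d\}=D_i\setminus\{c,d\}$ for every $i$. Moreover, $C_i\cap\{c,d\}\neq\emptyset$ if and only if $D_i\cap\{c,d\}\neq\emptyset$. Consequently the family $\mathcal{I}(R)$ of admissible index sets $I$ is the same for $\boldsymbol{C}$ and for $\boldsymbol{D}$.

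For a fixed $I\in\mathcal{I}(R)$, classify the indices $i\in I$ as follows:
\begin{itemize}
  \item type $B$ if $C_i$ contains both $c$ and $d$;
  \item type $P$ if $C_i$ contains $c$ but not $d$ (exactly the indices that $\mathcal{R}_{c,d}$ changes);
  \item type $Q$ if $C_i$ contains $d$ but not $c$.
\end{itemize}
Let $b_I$ and $p_I$ be the numbers of indices of types $B$ and $P$ in $I$. Under $\boldsymbol{C}$, the achievable multiplicities of $c$ form the interval $[p_I,\,p_I+b_I]$. Under $\boldsymbol{D}$, type $B$ and type $Q$ are unchanged while type $P$ becomes type $Q$, so the achievable multiplicities form $[0,\,b_I]$.

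Taking unions over $I\in\mathcal{I}(R)$ gives
\begin{equation*}
  A_{\boldsymbol{D}}(R)=\bigcup_I[0,b_I]=[0,\max_I b_I] ,
\end{equation*}
which has $\max_I b_I+1$ elements. On the other hand, $A_{\boldsymbol{C}}(R)$ contains the interval $[p_{I^*},p_{I^*}+b_{I^*}]$ for an $I^*$ attaining the maximum, so it has at least as many elements. Summing over $R$ then yields $N_{\boldsymbol{C}}\ge N_{\boldsymbol{D}}$.

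The step I expect to need the most care is verifying that the admissible family $\mathcal{I}(R)$, including the realizability of $R$ on the complement of $I$, is genuinely identical for $\boldsymbol{C}$ and $\boldsymbol{D}$. This is where the exact definition of $\mathcal{R}_{c,d}$ is used. Once that is established, the interval description makes the comparison immediate.
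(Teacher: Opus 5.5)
Your proof is correct and is essentially the paper's own argument, written in terms of multisets and index sets instead of the generating polynomial $P_{\boldsymbol{C}}$: your fibers $R$ are the paper's monomials $M(\boldsymbol{x})$ avoiding $x_c,x_d$, your counts $(p_I,q_I,b_I)$ are its triples $(r,s,t)\in J_M$, and comparing $\bigcup_I[p_I,p_I+b_I]$ with $[0,\max_I b_I]$ is exactly its comparison of the $x_c$-exponent sets $\bigcup\{r,\ldots,r+t\}$ and $\bigcup\{0,\ldots,t\}$. The only point to add is that when $\mathcal{I}(R)=\emptyset$ both $A_{\boldsymbol{C}}(R)$ and $A_{\boldsymbol{D}}(R)$ are empty, so a maximizing $I^*$ exists whenever you need one.
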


\begin{proof}
  For each $i \in [n]$ write
  \begin{equation*}
    \sum_{c \in C_i} x_c = p_i (\boldsymbol{x}) + \delta_i x_c + \varepsilon_i
     x_d
  \end{equation*}
  where $\delta_i, \varepsilon_i \in \{ 0, 1 \}$ and where $p_i
  (\boldsymbol{x})$ does not involve the variables $x_c$ and $x_d$. Write $I
  (\delta, \varepsilon)$ for the set of indices $i$ such that $\delta_i =
  \delta$ and $\varepsilon_i = \varepsilon$. By construction, we have
  \begin{equation*}
    P_{\boldsymbol{C}} (\boldsymbol{x}) = \prod_{i \in I (0, 0)} p_i \prod_{i \in
     I (1, 0)} (p_i + x_c) \prod_{i \in I (0, 1)} (p_i + x_d) \prod_{i \in I
     (1, 1)} (p_i + x_c + x_d) 
  \end{equation*}
  where any empty products are taken to be 1.  Upon partial expansion (leaving the terms $x_c + x_d$ from the last product
  combined), this can be expressed as
  \begin{equation*}
    P_{\boldsymbol{C}} (\boldsymbol{x}) = \sum_M M (\boldsymbol{x}) \sum_{(r, s, t)
     \in J_M} \lambda_{r, s, t}^{(M)} x_c^r x_d^s (x_c + x_d)^t
  \end{equation*}
  where the $M (\boldsymbol{x})$ are distinct monomials that do not involve the
  variables $x_c$ and $x_d$ and the $\lambda_{r, s, t}^{(M)}$ are positive
  integers. Crucially, since $\boldsymbol{D}=\mathcal{R}_{c, d} (\boldsymbol{C})$
  and since $\mathcal{R}_{c, d}$ only changes those $C_i$ for which $i \in I
  (1, 0)$, we also have the corresponding expansion
  \begin{equation*}
    P_{\boldsymbol{D}} (\boldsymbol{x}) = \sum_M M (\boldsymbol{x}) \sum_{(r, s, t)
     \in J_M} \lambda_{r, s, t}^{(M)} x_d^{r + s} (x_c + x_d)^t .
  \end{equation*}
  Observe that the claim $N_{\boldsymbol{C}} \geq N_{\boldsymbol{D}}$ follows
  if we can show that the number of monomials in $q_{\boldsymbol{C}}$ exceeds
  the number of monomials in $q_{\boldsymbol{D}}$ where
  \begin{equation*}
    q_{\boldsymbol{C}} = \sum_{(r, s, t) \in J_M} \lambda_{r, s, t}^{(M)} x_c^r
     x_d^s (x_c + x_d)^t, \quad q_{\boldsymbol{D}} = \sum_{(r, s, t) \in J_M}
     \lambda_{r, s, t}^{(M)} x_d^{r + s} (x_c + x_d)^t .
  \end{equation*}
  Note that $q_{\boldsymbol{C}}$ and $q_{\boldsymbol{D}}$ are polynomials in $x_c,
  x_d$ that are homogeneous of degree $n - \deg (M)$. When counting the
  monomials in $q_{\boldsymbol{C}}$ and $q_{\boldsymbol{D}}$ we can therefore
  focus on only the possible powers of $x_c$ that occur. For
  $q_{\boldsymbol{C}}$ and $q_{\boldsymbol{D}}$, these are those with exponent in
  \begin{equation*}
    \bigcup_{(r, s, t) \in J_M} \{ r, r + 1, \ldots, r + t \}, \quad
     \bigcup_{(r, s, t) \in J_M} \{ 0, 1, \ldots, t \},
  \end{equation*}
  respectively (so that the number of monomials equals the size of these
  sets). Let $t_{\max}$ be the maximal value of $t$ among the triples $(r, s,
  t) \in J_M$. It follows that $q_{\boldsymbol{D}}$ has exactly $t_{\max} + 1$
  monomials. On the other hand, we readily see that $q_{\boldsymbol{C}}$ has at
  least $t_{\max} + 1$ monomials coming just from a single triple of the form
  $(r, s, t_{\max}) \in J_M$.
\end{proof}

We are now in a convenient position to prove Theorem~\ref{thm:edgeless} which
we restate in equivalent form below.

\begin{theorem}
  \label{thm:edgeless:pf}If $C_1, \ldots, C_n$ are sets of size $k$, then
  \begin{equation}
    \left| \left\{ [c_1, \ldots, c_n] \; : \; c_j \in C_j \right\} \right|
    \geq \left| \left\{ [c_1, \ldots, c_n] \; : \; c_j \in [k] \right\}
    \right| . \label{eq:edgeless:pf}
  \end{equation}
\end{theorem}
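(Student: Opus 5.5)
Our plan is to repeatedly apply the shifting operators $\mathcal{R}_{c,d}$ from Lemma~\ref{lem:NCD} until every $C_j$ equals $[k]$. We use the reduction already made that $C_1,\ldots,C_n$ are $k$-element subsets of $C=[kn]$. By Lemma~\ref{lem:NCD}, each shift does not increase $N_{\boldsymbol{C}}$. So if a finite sequence of shifts takes $\boldsymbol{C}$ to $([k],\ldots,[k])$, then $N_{\boldsymbol{C}}\ge N_{([k],\ldots,[k])}=\binom{k+n-1}{n}$. This is exactly \eqref{eq:edgeless:pf}.

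First we check that shifting preserves the hypotheses. If $c\in C_i$ and $d\notin C_i$, then $D_i=\{d\}\cup C_i\setminus\{c\}$ still has exactly $k$ elements, and otherwise $D_i=C_i$. So $\mathcal{R}_{c,d}(\boldsymbol{C})$ is again a tuple of $k$-subsets of $[kn]$, and Lemma~\ref{lem:NCD} can be applied again. We only use shifts $\mathcal{R}_{c,d}$ with $d\in[k]$ and $c\in[kn]\setminus[k]$.

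For termination we use the potential $\Phi(\boldsymbol{C})=\sum_i |C_i\cap[k]|$. Suppose some $C_i\neq[k]$. Since $|C_i|=k$, there is then some $c\in C_i\setminus[k]$ and some $d\in[k]\setminus C_i$. Applying $\mathcal{R}_{c,d}$ changes at least the set $C_i$. Every set it changes loses the element $c\notin[k]$ and gains the element $d\in[k]$. Hence $\Phi$ strictly increases. Since $\Phi\le kn$, after finitely many shifts every $C_i$ satisfies $|C_i\cap[k]|=k$, that is, $C_i=[k]$. Chaining the inequalities from Lemma~\ref{lem:NCD} along this sequence gives the result.

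We expect no real obstacle here; the substantive work is in Lemma~\ref{lem:NCD}. The only points needing care are that the reduction to subsets of $[kn]$ is harmless and that each shift strictly increases $\Phi$. The reduction is harmless because relabeling colors injectively preserves the number of multisets, and $n$ sets of size $k$ use at most $kn$ colors in total. The strict increase holds because the chosen shift genuinely modifies $C_i$.
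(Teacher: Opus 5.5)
Your proposal is correct and follows the paper's proof: both reduce to $k$-subsets of $[kn]$, then repeatedly apply $\mathcal{R}_{c,d}$ with $c \in C_j \setminus [k]$ and $d \in [k]\setminus C_j$, invoking Lemma~\ref{lem:NCD} at each step. The only difference is the termination measure: you use $\sum_i |C_i\cap[k]|$, which increases, while the paper uses the weight $\sum_j\sum_{c\in C_j} c$, which decreases; your measure has the small advantage that its maximum $kn$ is attained exactly at $([k],\ldots,[k])$.
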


\begin{proof}
  Set $\boldsymbol{C}= (C_1, \ldots, C_n)$ and assume, without loss of
  generality, that $C_j \subseteq [k n]$. We claim that repeated application
  of the operator $\mathcal{R}_{c, d}$ for suitable choices of $c$ and $d$
  will eventually result in $\boldsymbol{E}= (E, \ldots, E)$ with $E = [k]$. By
  Lemma~\ref{lem:NCD}, this implies that $N_{\boldsymbol{C}} \geq
  N_{\boldsymbol{E}}$ which is \eqref{eq:edgeless:pf}.
  
  To see that it is always possible to reach $\boldsymbol{E}$ in this fashion
  from any initial $\boldsymbol{C}$, consider the quantity
  \begin{equation*}
    w (\boldsymbol{C}) = \sum_{j = 1}^n \sum_{c \in C_j} c
  \end{equation*}
  which we use to measure the ``weight'' of $\boldsymbol{C}$ (recall that we are
  assuming $C_j \subseteq [k n]$). Clearly, $\boldsymbol{E}$ has the minimal
  possible weight. For any $\boldsymbol{C}$ besides $\boldsymbol{E}$ there exists
  an index $j$ such that $C_j$ contains an element $c > k$. In that case,
  there must also be an element $d \in [k]$ such that $d \not\in C_j$. The
  operator $\mathcal{R}_{c, d}$ applied to $\boldsymbol{C}$ therefore alters
  $C_j$. Write $\boldsymbol{D}=\mathcal{R}_{c, d} (\boldsymbol{C})$. By
  construction, since $d < c$, it is clear that $w (\boldsymbol{D}) < w
  (\boldsymbol{C})$. If this process is repeated, the weights strictly decrease
  until we reach the minimal-weight assignment $\boldsymbol{E}$ in a finite
  number of steps.
\end{proof}

\begin{example}
  In the case $n = 3, k = 4$, take $\boldsymbol{C}_0 = (C_1, C_2, C_3)$ with
  \begin{equation*}
    C_1 = \{ 1, 2, 3, 4 \}, \quad C_2 = \{ 4, 5, 6, 7 \}, \quad C_3 = \{ 1,
     2, 4, 5 \} .
  \end{equation*}
  We apply $\mathcal{R}_{7, 1}$, followed by $\mathcal{R}_{6, 2}$,
  $\mathcal{R}_{5, 3}$ to obtain $\boldsymbol{C}_1, \boldsymbol{C}_2,
  \boldsymbol{C}_3$ which together with $N_{\boldsymbol{C}_i}$ are recorded in the
  following table (note that $N_{\boldsymbol{C}_3} = \binom{3 + 4 - 1}{3} =
  20$):
  \begin{equation*}
    \begin{array}{|l|l|l|}
       \hline
       \boldsymbol{C} & (C_1, C_2, C_3) & N_{\boldsymbol{C}}\\
       \hline
       \boldsymbol{C}_0 & \{ 1, 2, 3, 4 \}, \{ 4, 5, 6, 7 \}, \{ 1, 2, 4, 5 \} &
       48\\
       \hline
       \boldsymbol{C}_1 & \{ 1, 2, 3, 4 \}, \{ 1, 4, 5, 6 \}, \{ 1, 2, 4, 5 \} &
       40\\
       \hline
       \boldsymbol{C}_2 & \{ 1, 2, 3, 4 \}, \{ 1, 2, 4, 5 \}, \{ 1, 2, 4, 5 \} &
       29\\
       \hline
       \boldsymbol{C}_3 & \{ 1, 2, 3, 4 \}, \{ 1, 2, 3, 4 \}, \{ 1, 2, 3, 4 \} &
       20\\
       \hline
     \end{array}
  \end{equation*}
\end{example}

\begin{example}
  In the case $n = 4, k = 3$, take $\boldsymbol{C}_0 = (C_1, C_2, C_3, C_4)$
  with
  \begin{equation*}
    C_1 = \{ 1, 2, 3 \}, \quad C_2 = \{ 2, 3, 4 \}, \quad C_3 = \{ 3, 4, 5
     \}, \quad C_4 = \{ 1, 2, 5 \} .
  \end{equation*}
  We apply $\mathcal{R}_{4, 1}$, $\mathcal{R}_{5, 2}$, $\mathcal{R}_{5, 3}$ to
  obtain $\boldsymbol{C}_1, \boldsymbol{C}_2, \boldsymbol{C}_3$ which together with
  $N_{\boldsymbol{C}_i}$ are recorded in the following table (note that
  $N_{\boldsymbol{C}_3} = \binom{4 + 3 - 1}{4} = 15$):
  \begin{equation*}
    \begin{array}{|l|l|l|}
       \hline
       \boldsymbol{C} & (C_1, C_2, C_3, C_4) & N_{\boldsymbol{C}}\\
       \hline
       \boldsymbol{C}_0 & \{ 1, 2, 3 \}, \{ 2, 3, 4 \}, \{ 3, 4, 5 \}, \{ 1, 2,
       5 \} & 47\\
       \hline
       \boldsymbol{C}_1 & \{ 1, 2, 3 \}, \{ 1, 2, 3 \}, \{ 1, 3, 5 \}, \{ 1, 2,
       5 \} & 29\\
       \hline
       \boldsymbol{C}_2 & \{ 1, 2, 3 \}, \{ 1, 2, 3 \}, \{ 1, 2, 3 \}, \{ 1, 2,
       5 \} & 24\\
       \hline
       \boldsymbol{C}_3 & \{ 1, 2, 3 \}, \{ 1, 2, 3 \}, \{ 1, 2, 3 \}, \{ 1, 2,
       3 \} & 15\\
       \hline
     \end{array}
  \end{equation*}
\end{example}

\section{Discussion}

We have proved Conjecture~\ref{conj: provoke} posed in \cite{KM25}, thus showing that Question~\ref{ques: fundamental} has an affirmative answer for edgeless graphs. Despite its deceptively simple appearance, this result implies, as observed in Corollary~\ref{cor:multi}, that if every connected component $H$ of a graph $G$ has the property that $P_{\ell}^u (H, k) = P^u (H, k)$ whenever $k \geq N$, then the graph $G$ itself also satisfies $P_{\ell}^u (G, k) = P^u (G, k)$ whenever $k \geq N$. In particular, a graph $G$ satisfies the statement in Question~\ref{ques: fundamental} if each of its connected components does so.

Despite its simplicity, we have not been able to locate Theorem~\ref{thm:edgeless} in the literature. Given that our proof in Section~\ref{sec:edgeless}, like many results in extremal set theory~\cite{F87}, uses shifting as a crucial ingredient, it is natural to wonder whether one can conclude Theorem~\ref{thm:edgeless} from a known more general result.

Next, in the appendix, we give a cleaned-up version of the AI-produced proof of Theorem~\ref{thm:edgeless}. The key idea is in Lemma~\ref{AI-lem1} which proves a shadow inequality for a family of multisets in the style of the probabilistic proof of the local LYM inequality (see~\cite{B86}) by Frankl~\cite{F83}. In some sense, our approach (shifting arguments) and the AI's approach (shadow inequalities) are the two natural classic approaches to problems in set systems like Theorem~\ref{thm:edgeless}.

Note that the main question from~\cite{KM25} remains open. Gowers~\cite{G26} asked ChatGPT 5.5 Pro to prove/disprove Question~\ref{ques: fundamental} for all graphs but it was unable to do so. Currently, we do not know of any approach for tackling this question in full generality.

\appendix\section{An alternative AI-produced proof}\label{sec:edgeless:ai}

In Section~\ref{sec:edgeless}, we proved Theorem~\ref{thm:edgeless} using a generating function argument. In this appendix, we offer an alternative proof that is extracted from the AI-generated output that was obtained by Gowers~\cite{G26} using ChatGPT 5.5 Pro and shared with the first two authors.

We use the standard notation that, for a set $B$, $\multichoose{B}{s}$ is the set of all multisets of size $s$ with elements from $B$.

\begin{lemma}\label{AI-lem1}
  \label{lem: step1}Suppose $B$ is a $q$-element set and $\mathcal{A}
  \subseteq \multichoose{B}{s}$. Let $\mathcal{A}_B^+ = \{A \cup \{b\}: A \in
  \mathcal{A}, b \in B\}$. Then, for any $s \geq 0$,
  \begin{equation*}
    \frac{|\mathcal{A}_B^+ |}{\binom{q + s}{s + 1}} \geq
     \frac{|\mathcal{A}|}{\binom{q + s - 1}{s}} .
  \end{equation*}
\end{lemma}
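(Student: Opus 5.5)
We prove Lemma~\ref{lem: step1} by double counting in the bipartite ``containment'' graph between $\multichoose{B}{s}$ and $\multichoose{B}{s+1}$, in the spirit of the local LYM inequality. Join $A \in \multichoose{B}{s}$ to $A' \in \multichoose{B}{s+1}$ when $A' = A \cup \{b\}$ for some $b \in B$; here $A \cup \{b\}$ means the multiset obtained by raising the multiplicity of $b$ by one. The obstacle is that, unlike the set case, this graph is not biregular. From $A$ there are exactly $q$ upward steps, one for each $b \in B$, and they give $q$ distinct multisets. But $A'$ has only as many downward neighbors as it has \emph{distinct} elements. Plain edge counting therefore fails, and we must weight the edges.

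We take the weight of the edge $(A, A \cup \{b\})$ to be $w(A,b) = m_{A\cup\{b\}}(b)$, the multiplicity of $b$ in $A' = A \cup \{b\}$. Two sums will be checked.
\begin{itemize}
\item For fixed $A'$, the weighted down-degree is $\sum_{b \in \mathrm{supp}(A')} m_{A'}(b) = s+1$, which is constant.
\item For fixed $A$, the weighted up-degree is $\sum_{b \in B} (m_A(b)+1) = s + q$, also constant.
\end{itemize}
The weighted graph is thus biregular with degrees $q+s$ and $s+1$. As a sanity check, the ratio $(q+s)/(s+1)$ equals $\binom{q+s}{s+1}/\binom{q+s-1}{s}$, as it must. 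This weighting corresponds to the probabilistic picture of choosing a uniformly random $(s+1)$-multiset together with an element deleted ``by position''.

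The double count then runs as follows. Sum the weights of all edges leaving $\mathcal{A}$. By the up-degree computation this total is exactly $(q+s)|\mathcal{A}|$. Every such edge lands in $\mathcal{A}_B^+$, by the definition of $\mathcal{A}_B^+$. The total is therefore at most the sum of the weighted down-degrees over $\mathcal{A}_B^+$, which is $(s+1)|\mathcal{A}_B^+|$. This gives
\[
|\mathcal{A}_B^+| \geq \frac{q+s}{s+1}\,|\mathcal{A}|,
\]
and dividing by $\binom{q+s}{s+1} = \frac{q+s}{s+1}\binom{q+s-1}{s}$ yields the claim.

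The case $s=0$ is consistent with this. There $\mathcal{A}$ is either empty or consists of the empty multiset, and $\binom{q-1}{0}=1$, so it needs no separate treatment.

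The only delicate point is choosing the right weight and verifying the two degree identities. Everything else is routine bookkeeping.
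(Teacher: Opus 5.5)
Your proof is correct and is essentially the paper's argument. The paper draws a uniform random $s$-multiset $Z$ and raises coordinate $i$ with probability $(z_i+1)/(s+q)$, which is exactly your edge weight $m_A(b)+1$ normalized by the up-degree $q+s$; its check that the resulting $Y$ is uniform is your down-degree identity $\sum_{b} m_{A'}(b)=s+1$, so you have written the same coupling as a deterministic weighted double count.
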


\begin{proof}
  We may suppose $B = [q]$. We may also view each element, $M$, of
  $\multichoose{B}{s}$ as a $q$-tuple, $(x_1, \ldots, x_q)$, of nonnegative
  integers satisfying $x_1 + \ldots + x_q = s$ where $x_i$ indicates the
  number of copies of $i$ in $M$.
  
  We now describe a procedure for choosing an element, $Y$, of
  $\multichoose{B}{s + 1}$. First, choose a $Z = (z_1, \ldots, z_q)$ from
  $\multichoose{B}{s}$ uniformly at random. Then, to obtain $Y$, choose a
  coordinate of $Z$ to increase by 1 so that the probability that the
  $i^{th}$-coordinate of $Z$ increases by 1 is $\frac{z_i + 1}{s + q}$. Note
  that for any $(y_1, \ldots, y_q) \in \multichoose{B}{s + 1}$,
  \begin{equation*}
    P (Y = (y_1, \ldots, y_q)) = \sum_{i = 1}^q \frac{1}{\binom{q + s -
     1}{s}}  \frac{y_i}{s + q} = \frac{s + 1}{\binom{q + s - 1}{s} (s + q)} =
     \frac{1}{\binom{q + s}{s + 1}} .
  \end{equation*}
  Consequently, the procedure for choosing $Y$ is the same as choosing $Y$
  uniformly at random.
  
  So, if $Z$ is chosen from $\multichoose{B}{s}$ uniformly at random, and $Y$
  is chosen via the coupling procedure above, the probability $Y \in
  \mathcal{A}_B^+$ is at least the probability $Z \in \mathcal{A}$. The result
  immediately follows.
\end{proof}

\begin{lemma}
  \label{lem: step2}Suppose $\mathcal{A}$ is a set of multisets each of size
  $r$ where each element of $\mathcal{A}$ has its elements in some universal
  set $U$. Let $B$ be a set of size $q$ with $B \subseteq U$. If
  \begin{equation*}
    \mathcal{A}+ B = \{A \cup \{b\}: A \in \mathcal{A}, b \in B\},
  \end{equation*}
  then
  \begin{equation*}
    |\mathcal{A}+ B| \geq \frac{\binom{q + r}{r + 1}}{\binom{q + r -
     1}{r}} |\mathcal{A}| .
  \end{equation*}
\end{lemma}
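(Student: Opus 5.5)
We reduce Lemma~\ref{lem: step2} to Lemma~\ref{lem: step1} by splitting each multiset into its part inside $B$ and its part outside $B$. For a multiset $A$ on $U$, write $A = A' \cup A''$, where $A'$ is the sub-multiset of elements lying in $B$ and $A''$ is the sub-multiset of elements lying in $U \setminus B$. Group the family $\mathcal{A}$ by the outside part. For each multiset $T$ on $U\setminus B$, let
\begin{equation*}
  \mathcal{A}_T = \{A' : A \in \mathcal{A},\ A'' = T\} \subseteq \multichoose{B}{s_T}, \qquad s_T = r - |T| .
\end{equation*}
Then $|\mathcal{A}| = \sum_T |\mathcal{A}_T|$.

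Adding an element $b \in B$ leaves the outside part unchanged. So $\mathcal{A}+B$ is the disjoint union over $T$ of the families $\{X \cup T : X \in (\mathcal{A}_T)^+_B\}$. These pieces are disjoint because distinct $T$ give distinct outside parts, and within a piece the map $X \mapsto X\cup T$ is injective. Hence
\begin{equation*}
  |\mathcal{A}+B| = \sum_T |(\mathcal{A}_T)^+_B| .
\end{equation*}

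Applying Lemma~\ref{lem: step1} to each $\mathcal{A}_T$ with $s = s_T$ gives
\begin{equation*}
  |(\mathcal{A}_T)^+_B| \ge f(s_T)\,|\mathcal{A}_T|, \qquad f(s) = \frac{\binom{q+s}{s+1}}{\binom{q+s-1}{s}} .
\end{equation*}

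It remains to compare $f(s_T)$ with $f(r)$. A direct computation gives
\begin{equation*}
  f(s) = \frac{(q+s)!\,s!\,(q-1)!}{(s+1)!\,(q-1)!\,(q+s-1)!} = \frac{q+s}{s+1} = 1 + \frac{q-1}{s+1},
\end{equation*}
which is nonincreasing in $s$ since $q \ge 1$. (If $q = 0$, then $\mathcal{A}+B$ is empty and the claimed bound is $0$, so the statement holds trivially.) Because $s_T \le r$, we get $f(s_T) \ge f(r)$. Summing over $T$ yields
\begin{equation*}
  |\mathcal{A}+B| \ge f(r)\,|\mathcal{A}|,
\end{equation*}
which is exactly the claimed inequality.

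The main point to handle carefully is the disjointness and injectivity in the decomposition above. Both follow because the multiplicities of elements outside $B$ are preserved when an element of $B$ is added. The monotonicity of $f$ is the only other input, and it is immediate from the closed form.
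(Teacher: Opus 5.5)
Your proposal is correct and follows the paper's proof essentially step for step: split each multiset into its part $T$ outside $B$ and its part inside $B$, apply Lemma~\ref{lem: step1} to each fiber with $s = r-|T|$, use that $\frac{q+s}{s+1}$ is nonincreasing in $s$, and sum over $T$. One small quibble: for $q=0$ and $r\ge 1$ the stated ratio of binomials is $0/0$ rather than $0$, so that case is degenerate rather than trivially true, but the lemma is only intended (and used) for $q\ge 1$.
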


\begin{proof}
  Let $C = U - B$. For each multiset $T$ with elements in $C$ and $|T|
  \leq r$, let
  \begin{equation*}
    \mathcal{A} (T) = \left\{ S \in \multichoose{B}{r - |T|} : S \cup T \in
     \mathcal{A} \right\} .
  \end{equation*}
  Since each element of $\mathcal{A}$ can be uniquely written as the disjoint
  union of a multiset with elements from $B$ and a multiset with elements from
  $C$, $|\mathcal{A}| = \sum_T |\mathcal{A}(T) |$. Moreover,
  \begin{equation*}
    |\mathcal{A}+ B| = \sum_T |\mathcal{A}(T)_B^+ | .
  \end{equation*}
  When $s = r - |T|$, Lemma~\ref{lem: step1} gives
  \begin{equation*}
    |\mathcal{A}(T)_B^+ | \geq \frac{\binom{q + s}{s + 1}}{\binom{q + s
     - 1}{s}} |\mathcal{A}(T) | = \frac{q + s}{s + 1} |\mathcal{A}(T) |
     \geq \frac{q + r}{r + 1} |\mathcal{A}(T) | = \frac{\binom{q + r}{r +
     1}}{\binom{q + r - 1}{r}} |\mathcal{A}(T) | .
  \end{equation*}
  Summing over all $T$ gives the result.
\end{proof}

We are now ready to prove Theorem~\ref{thm:edgeless}, restated below for convenience.

\begin{theorem}
  Suppose that $C_1, \ldots, C_n$ are sets of size $k$. Then, the set of
  multisets
  \begin{equation*}
    \left\{ [c_1, \ldots, c_n] \; : \; c_j \in C_j \right\}
  \end{equation*}
  has size at least $\binom{k + n - 1}{n}$.
\end{theorem}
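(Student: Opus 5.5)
The plan is to proceed by induction on the number $n$ of sets, building the family of multisets one set at a time with Lemma~\ref{lem: step2}. For $j \in \{0, 1, \ldots, n\}$ let $\mathcal{A}_j$ be the set of multisets $[c_1, \ldots, c_j]$ with $c_i \in C_i$ for $i \le j$. Here $\mathcal{A}_0 = \{\emptyset\}$, so $|\mathcal{A}_0| = 1 = \binom{k-1}{0}$. By construction,
\begin{equation*}
  \mathcal{A}_{j+1} = \mathcal{A}_j + C_{j+1}
\end{equation*}
in the notation of Lemma~\ref{lem: step2}. The universal set $U$ is the union of all the $C_i$, and $B = C_{j+1}$ has size $q = k$. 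The elements of $\mathcal{A}_j$ are multisets of size $r = j$.

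Lemma~\ref{lem: step2} then gives
\begin{equation*}
  |\mathcal{A}_{j+1}| \geq \frac{\binom{k+j}{j+1}}{\binom{k+j-1}{j}} \, |\mathcal{A}_j| .
\end{equation*}
The key point is that this ratio does not depend on the particular sets. Multiplying these inequalities for $j = 0, 1, \ldots, n-1$, the product telescopes:
\begin{equation*}
  |\mathcal{A}_n| \geq \prod_{j=0}^{n-1} \frac{\binom{k+j}{j+1}}{\binom{k+j-1}{j}} \, |\mathcal{A}_0| = \frac{\binom{k+n-1}{n}}{\binom{k-1}{0}} = \binom{k+n-1}{n} .
\end{equation*}
Equivalently, one proves $|\mathcal{A}_j| \ge \binom{k+j-1}{j}$ by induction on $j$, using the single inequality above for each inductive step. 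Since $\mathcal{A}_n$ is exactly the set in the statement, this proves the theorem.

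All the substance is already contained in Lemma~\ref{lem: step1} and Lemma~\ref{lem: step2}, so the only things to check are bookkeeping. First, the base case $r = 0$ (equivalently $s = 0$) must be admissible in Lemma~\ref{lem: step2}. Second, Lemma~\ref{lem: step2} must apply even though the multisets in $\mathcal{A}_j$ may contain elements of $C_{j+1}$, which is why the lemma was stated with a general universal set $U \supseteq B$. Third, the operation $A \cup \{b\}$ must be read as multiset union, so that adding $b$ increases its multiplicity by one. If I had to name a main obstacle, it would be this last point of convention: confirming that $\mathcal{A}_j + C_{j+1}$ really coincides with $\mathcal{A}_{j+1}$ under multiset addition. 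That holds directly from the definitions.
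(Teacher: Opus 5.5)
Your proof is correct and is essentially the paper's appendix proof: induction on $n$, applying Lemma~\ref{lem: step2} with $B = C_n$ and $q = k$ to pass from $\mathcal{A}$ to $\mathcal{A}+C_n$. The only cosmetic difference is that you start from $\mathcal{A}_0=\{\emptyset\}$, using the $r=0$ case of the lemma (valid, since Lemma~\ref{lem: step1} allows $s\ge 0$), whereas the paper starts from the trivial case $n=1$; the paper's main proof in Section~\ref{sec:edgeless} instead uses shifting.
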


\begin{proof}
  Suppose $k \in \N$ is fixed. The proof is by induction on $n$. First note
  that the result is clear when $n = 1$. So, we may suppose $n \geq 2$
  and the desired result holds for all positive integers less than $n$. Let
  $\mathcal{A}= \{ [c_1, \ldots, c_{n - 1}] : c_j \in C_j \}$. By the
  induction hypothesis, $|\mathcal{A}| \geq \binom{k + n - 2}{n - 1}$.
  Notice that $\mathcal{A}+ C_n = \{ [c_1, \ldots, c_n] : c_j \in C_j
  \}$. By Lemma~\ref{lem: step2},
  \begin{equation*}
    |\mathcal{A}+ C_n | \geq \frac{\binom{k + n - 1}{n}}{\binom{k + n -
     2}{n - 1}} |\mathcal{A}| \geq \binom{k + n - 1}{n} .
  \end{equation*}
\end{proof}

\begin{acknowledgements}
We are grateful to W.~T.~Gowers for sharing with the first two authors the AI-generated output described above and for allowing us to include a paraphrased version of one of the resulting proofs as an appendix. 
\end{acknowledgements}

\bibliographystyle{hplain}
\bibliography{bibliography}

\end{document}